\numberwithin{equation}{section}
\numberwithin{figure}{section}
\theoremstyle{plain}
\newtheorem{thm}{\protect\theoremname}
\theoremstyle{definition}
\newtheorem{defn}[thm]{\protect\definitionname}
\theoremstyle{plain}
\newtheorem{prop}[thm]{\protect\propositionname}
\theoremstyle{remark}
\newtheorem{rem}[thm]{\protect\remarkname}
\theoremstyle{plain}
\newtheorem{lem}[thm]{\protect\lemmaname}
\providecommand{\definitionname}{Definition}
\providecommand{\lemmaname}{Lemma}
\providecommand{\propositionname}{Proposition}
\providecommand{\remarkname}{Remark}
\providecommand{\theoremname}{Theorem}
\begin{document}
\title[Period-two sol. for a class of dist. delay diff. eq.]{Period-two solution for a class of distributed delay differential equations}
\author{Yukihiko Nakata}
\address{Department of Mathematical Sciences, College of Engineering and Science,
Aoyama Gakuin University, 5-10-1 Fuchinobe, Chuo-ku, Sagamihara, Kanagawa
252-5258 Japan}
\begin{abstract}
We study the existence of periodic solutions for differential equations
with distributed delay. It is shown that, for a class of distributed
delay differential equation, a symmetric period-2 solution is obtained
as a periodic solution of a Hamiltonian system of ordinary differential
equations, where the period is twice the maximum delay. This work
extends the result of Kaplan and Yorke (J. Math. Anal. Appl., 1974)
for a discrete delay differential equation with an odd nonlinear function.
To illustrate the result, we present distributed delay differential
equations that have periodic solutions expressed in terms of Jacobi
elliptic functions.

\end{abstract}

\subjclass[2020]{34K13, 37C27} 
\keywords{Distributed delay differential equations; Period-2 solution; Hamiltonian system; Jacobi elliptic functions}

\maketitle

\section{Introduction}

In the study of delay differential equations, it has been shown that
delay is often responsible for causing oscillation of the solution.
Periodic solutions of delay differential equations (DDEs) have been
extensively studied in the literature, see e.g. \cite{KennedyStumpf:2016,Nussbaum:1973,Nussbaum:1974,Nussbaum:19792,Walther:2014}
and references therein. The theory of delay differential equations
has been extensively developed and widely applied to problems in several
disciplines, see \cite{Diekmann:1995,Hale:1993,Smith:2010}.

In the paper \cite{Kaplan=000026Yorke:1974}, Kaplan and Yorke investigate
the existence of a symmetric period-4 solution to the following DDE:
\begin{equation}
x^{\prime}(t)=-f\left(x(t-1)\right),\label{eq:disc_dde}
\end{equation}
where $f$ is an odd continuous function mapping $\mathbb{R}$ to
$\mathbb{R}$. It is assumed that the function $f$ satisfies $xf(x)>0$
for $x\not=0$ with $f(0)=0$. The authors specifically focused on
a periodic solution that satisfies the following symmetry constraint:
\begin{equation}
x(t)=-x(t+2),\label{eq:KY_sym}
\end{equation}
implying the period of the solution is $4$. They showed that the
following planar system of ordinary differential equations (ODEs)
\begin{equation}
x^{\prime}=-f(y),\ y^{\prime}=f(x)\label{eq:KY_ODE}
\end{equation}
yields such a symmetric periodic solution, satisfying (\ref{eq:KY_sym}),
to DDE (\ref{eq:disc_dde}). The proof of Kaplan and Yorke \cite{Kaplan=000026Yorke:1974}
was elaborated by Nussbaum in \cite{Nussbaum:19792}. See also \cite{Walther:2014,KennedyStumpf:2016}.
Stability of the so-called Kaplan-Yorke type periodic solution has
been analyzed in \cite{ChowWalther:1988,Herz:1995}.

The study \cite{Kaplan=000026Yorke:1974} has been extended to a scalar
DDE and a system of DDEs, including (\ref{eq:disc_dde}) as a special
case. In \cite{IvanovDormayer:1998,IvanovDormayer:1999,IvanovLani-Wayda2006},
a scalar DDE incorporating $x(t)$ inside the feedback function is
studied. Fei employs a variational method to analyze the existence
of a symmetric periodic solution for a higher-order Hamiltonian systems
of ODEs to study a scalar DDE with multiple discrete delays \cite{Fei:2006-1,Fei:2006-2}.
Similar results are available for a system of DDEs with multiple discrete
delays \cite{GuoYu:2011,ZhangGuo:2006}. Recently, the author in \cite{Lopez:2020}
characterizes the set of periodic solutions for a scalar DDE, assuming
that feedback function satisfies a certain symmetry condition and
(positive and negative) monotonicity.

In \cite{Nakata:2018} the author of this paper studies the existence
of a period-2 solution for a logistic type differential equation with
a\textit{ distributed} delay: 
\begin{equation}
x'(t)=rx(t)\left(1-\int_{0}^{1}x(t-s)ds\right).\label{eq:logDDE}
\end{equation}
It is shown that a similar ansatz to that in Kaplan and Yorke \cite{Kaplan=000026Yorke:1974}
leads to a system of ODEs that yields a periodic solution to (\ref{eq:logDDE}).
It turns out that equation (\ref{eq:logDDE}) has a periodic solution
expressed in terms of Jacobi elliptic functions. In \cite{Nakata:2021}
the same author also shows that a pendulum equation yields a periodic
solution of a differential equation with a distributed delay. The
study \cite{Nakata:2021} is recently generalized in \cite{XiaoGuo:2023},
where the authors used a variational method. 

Distributed delay appears in modeling of biological processes and
population dynamics, see \cite[Chapter 7]{Smith:2010} and references
therein, transmission of information \cite{Sipahi:2008} and so on.
Linear stability of distributed delay differential equations (distributed
DDEs) has been studied in the literature, see \cite{Bernard:2001,Campbell:2009,Kiss:2010,Miyazaki:1997}
and references therein. In \cite{Walther:1975,Nussbaum:19791,Walther:1977,Kennedy:2014}
a non-ejective fixed point theorems is applied to prove the existence
of a periodic solution for distributed DDEs. We also refer \cite{Azevedoa:2007}
for the study of a coupled system of distributed DDEs. Distributed
delay can be seen as a limit case of multiple discrete delays. Kennedy
studies the existence of periodic solutions for a differential equation
with multiple discrete delays \cite{Kennedy:2009}. In \cite{Kiss:2012},
employing a computational fixed-point method, the authors study the
existence of periodic solutions for DDEs with multiple discrete delays
and show coexistence of multiple periodic solutions. The authors in
\cite{Breda:2016} study the existence of a symmetric periodic solution
for a nonlinear renewal equation, applying the connection between
DDE (\ref{eq:disc_dde}) and the planar system (\ref{eq:KY_ODE}).

In this paper, applying the idea of Kaplan and Yorke \cite{Kaplan=000026Yorke:1974},
we study the existence of a symmetric periodic solution to the following
distributed DDE:
\begin{equation}
x^{\prime}(t)=-g\left(\int_{0}^{1}f(x(t-s))ds\right),\label{eq:DDDE}
\end{equation}
where $f$ and $g$ are continuous functions, satisfying certain additional
properties. If $f$ and $g$ are suitable symmetric odd functions,
it is shown that a period-2 solution, satisfying the symmetric condition
$x(t)=-x(t-1)\ \left(\forall t\in\mathbb{R}\right)$, is given as
a solution of the following system of ODEs:\begin{subequations}\label{eq:Hameq}
\begin{align}
x^{\prime}(t) & =-g(y(t)),\label{eq:Hameq1}\\
y^{\prime}(t) & =2f(x(t)).\label{eq:Hameq2}
\end{align}
\end{subequations}The system of ODEs (\ref{eq:Hameq}) represents
a planar Hamiltonian system with the following Hamiltonian function:
\begin{equation}
\int_{0}^{y}g(\xi)d\xi+2\int_{0}^{x}f(\xi)d\xi.\label{eq:Ham_func}
\end{equation}
Since (\ref{eq:Hameq}) is a Hamiltonian system, for some particular
nonlinear function $f$ and $g$, it is possible to obtain an explicit
form of the periodic solution. The form would help our understanding
to the nature of the periodic solutions to (\ref{eq:DDDE}). In this
paper, we find a symmetric periodic solution of (\ref{eq:DDDE}),
exploitting the planar Hamiltonian system (\ref{eq:Hameq}), and obtain
a sufficient condition for the existence of such a symmetric periodic
solution of (\ref{eq:DDDE}) in Theorem \ref{thm:SPPScond}. We then
recover the results obtained in \cite{Nakata:2018,Nakata:2021} from
the view point of equation (\ref{eq:DDDE}). Kennedy \cite{Kennedy:2014}
studied the existence of a periodic solution for a general distributed
DDE including (\ref{eq:DDDE}) as a special case, employing the fixed
point theorem. We here obtain a slightly different condition from
the one obtained in \cite{Kennedy:2014}, due to our different approach. 

The paper is organized as follows. In Section 2, we characterize a
symmetric period-$2$ solution of (\ref{eq:DDDE}). It is shown that
a period-2 solution yields a family of periodic solutions for a distributed
DDE with a multiplicative parameter. In Section 3, we introduce the
notion of \textit{Special Symmetric Periodic Solution} (SSPS for short),
following \cite{IvanovDormayer:1998,IvanovDormayer:1999}. We show
that an SSPS of (\ref{eq:DDDE}) is related to a periodic solution
of Hamiltonian system of ODEs (\ref{eq:Hameq}), see Theorem \ref{Thm:iff}.
In Section 4, following \cite{Schaaf: 1990}, we obtain an explicit
condition for (\ref{eq:Hameq}) to have a period-2 solution, thus
a condition for (\ref{eq:DDDE}) to have a period-2 solution, that
is an SSPS. In Section 5, we reconsider particular examples of (\ref{eq:DDDE}),
from the previous studies \cite{Nakata:2018,Nakata:2021}. In the
special cases, period-2 solutions are expressed in terms of the Jacobi
elliptic functions. Detailed computations and materials are summarized
in Appendices A, B and C.

\section{Period-$2$ solution}
\begin{defn}
If a solution to (\ref{eq:DDDE}) satisfies $x(t)=x(t+2)$ for any
$t\in\mathbb{R}$, then the solution is called a period-2 solution.
\end{defn}

We impose the following hypotheses on $f$:
\begin{enumerate}
\item[(f1)] $f:\mathbb{R}\to\mathbb{R}$ is a continuous function.
\end{enumerate}
and the following hypotheses on $g$:
\begin{enumerate}
\item[(g1)] $g:\mathbb{R}\to\mathbb{R}$ is a continuous function.
\item[(g2)] $xg(x)>0$ for $x\in\mathbb{R}\setminus\left\{ 0\right\} $ and $g(0)=0$.
\item[(g3)] $g$ is an odd function, i.e., $g(x)=-g(-x)$ for $x\in\mathbb{R}$.
\end{enumerate}
The following Proposition motivates the study of the period-2 solution
to (\ref{eq:DDDE}). 
\begin{prop}
\label{prop:per2}Suppose that (f1), (g1-3) hold. Let $x:\mathbb{R}\to\mathbb{R}$
be a solution of equation (\ref{eq:DDDE}). Then the following statements
are equivalent. 
\begin{enumerate}
\item There exists $t_{0}\in\mathbb{R}$ for any $t\in\mathbb{R}$ such
that $x(t_{0}+t)=x(t_{0}-t)$ holds, i.e., $x$ is an even function
about $t=t_{0}$.
\item There exists $c\in\mathbb{R}$ such that
\begin{equation}
\forall t\in\mathbb{R},\ x(t)+x(t-1)=c\label{eq:x+x(t-1)}
\end{equation}
thus the solution $x$ is a period-2 solution. 
\end{enumerate}
\end{prop}

\begin{proof}
(1) $\Rightarrow$ (2) From the time translation invariance of equation
(\ref{eq:DDDE}), one can assume that $t_{0}=0$ without loss of generality.
Suppose that $x(t)=x(-t),\ \forall t\in\mathbb{R}$. Then, one obtains
\begin{align*}
x^{\prime}(t) & =-x^{\prime}(-t)\\
 & =g\left(\int_{0}^{1}f(x(-t-s))ds\right).\\
 & =g\left(\int_{0}^{1}f(x(t+s))ds\right)\quad(\because x(t)=x(-t))\\
 & =g\left(\int_{0}^{1}f(x(t+1-s))ds\right)\\
 & =-x^{\prime}(t+1),
\end{align*}
which implies that $x(t)+x(t-1)$ is a constant. Therefore, we obtain
the statement (2).

(2) $\Rightarrow$ (1) Let us assume that there exists a constant
$c$ such that for $\forall t\in\mathbb{R}$, $x(t)+x(t-1)=c$. Since
$x(t-1)=c-x(t)$, the solution of (\ref{eq:DDDE}) satisfies the following
system of ODEs \begin{subequations}\label{eq:HP}
\begin{align}
x^{\prime} & =-g(y),\label{eq:H1}\\
y^{\prime} & =f(x)-f(c-x),\label{eq:H2}
\end{align}
\end{subequations}where $y=\int_{0}^{1}f(x(t-s))ds$. Consider the
solution of (\ref{eq:HP}) with the initial condition $(x(t_{0}),y(t_{0}))=(x_{0},0)$
where $x_{0}$ is not a zero of $f(x)-f(c-x)$ and $t_{0}\in\mathbb{R}$
is the initial time. One can easily see that $\left(\tilde{x}(t+t_{0}),\tilde{y}(t+t_{0})\right):=\left(x(t_{0}-t),-y(t_{0}-t)\right)$
solves (\ref{eq:HP}) with the initial condition $\left(\tilde{x}(t_{0}),\tilde{y}(t_{0})\right)=(x(t_{0}),y(t_{0}))=(x_{0},0)$.
From the uniqueness of the solution (see e.g. \cite{Wahlen:2004}),
we obtain $x(t_{0}+t)=x(t_{0}-t)$. We thus obtain the statement (1).
\end{proof}
\begin{rem}
Similar results are obtained in \cite[Lemmas 3.1, 3.2]{Azevedoa:2007}
for a nonlinear renewal equation.
\end{rem}

We have the following result.
\begin{lem}
\label{lem:inv}Let $g(x)=x$. Suppose that (f1) holds. Let $x:\mathbb{R}\to\mathbb{R}$
be a solution of equation (\ref{eq:DDDE}). Assume that there exists
$c\in\mathbb{R}$ such that (\ref{eq:x+x(t-1)}) holds. Then, it holds
that 
\begin{equation}
\forall t\in\mathbb{R},\ \int_{0}^{2}f(x(t-s))ds=0.\label{eq:inv}
\end{equation}
\end{lem}

\begin{proof}
From equation (\ref{eq:DDDE}), one has
\[
x^{\prime}(t)+x^{\prime}(t-1)=-\int_{0}^{2}f(x(t-s))ds.
\]
Since $x(t)+x(t-1)$ is a constant, $x^{\prime}(t)+x^{\prime}(t-1)=0$
for $\forall t\in\mathbb{R}$ follows. Thus (\ref{eq:inv}) holds.
\end{proof}
Then, we show that the period-2 solution of equation (\ref{eq:DDDE})
gives rise to a family of periodic solutions to the equation of the
following form 
\begin{equation}
x^{\prime}(t)=-r\int_{0}^{1}f(x(t-s))ds,\label{eq:req}
\end{equation}
where $r>0$ is a multiplicative parameter. 
\begin{prop}
Let $g(x)=x$. Suppose that (f1) holds. Let $x:\mathbb{R}\to\mathbb{R}$
be a solution of equation (\ref{eq:DDDE}). Assume that there exists
$c\in\mathbb{R}$ such that (\ref{eq:x+x(t-1)}) holds. Then, for
every $n\in\mathbb{N}$, equation (\ref{eq:req}) with $r=\left(2n-1\right)^{2}$
has a periodic solution $x((2n-1)t)$ of period $2/(2n-1)$.
\end{prop}

\begin{proof}
For $n\in\mathbb{N}$, we let $x_{n}(t):=x(\alpha_{n}t)$, where $\alpha_{n}:=2n-1$.
From the direct computation, 
\begin{equation}
x_{n}^{\prime}(t)=\alpha_{n}x^{\prime}(\alpha_{n}t)=-\alpha_{n}\int_{0}^{1}f(x(\alpha_{n}t-s))ds.\label{eq:xn}
\end{equation}
Applying (\ref{eq:inv}) in Lemma \ref{lem:inv}, we see that 
\[
\int_{1}^{\alpha_{n}}f(x(\alpha_{n}t-s))ds=0
\]
holds. Therefore, 
\begin{equation}
\int_{0}^{1}f(x(\alpha_{n}t-s))ds=\int_{0}^{\alpha_{n}}f(x(\alpha_{n}t-s))ds=\alpha_{n}\int_{0}^{1}f(x_{n}(t-s))ds.\label{eq:fx}
\end{equation}
From (\ref{eq:fx}) and (\ref{eq:xn}), we see that $x_{n}(t)$ solves
equation (\ref{eq:req}) with $r=\alpha_{n}^{2}=\left(2n-1\right)^{2}$.
\end{proof}

\section{Distributed DDE (\ref{eq:DDDE}) and Hamiltonian system (\ref{eq:Hameq})}

In this section, we show that distributed DDE (\ref{eq:DDDE}) and
Hamiltonian system of ODEs (\ref{eq:Hameq}) are related via a period-2
solution. 

We now impose the following hypotheses on $f$:
\begin{enumerate}
\item[(f2)] $xf(x)>0$ for $x\in\mathbb{R}\setminus\left\{ 0\right\} $ and $f(0)=0$.
\item[(f3)] $f$ is an odd function, i.e., $f(x)=-f(-x)$ for $x\in\mathbb{R}$.
\end{enumerate}
Then, analogously to \cite{IvanovDormayer:1998,IvanovDormayer:1999},
we define a special symmetric periodic solution (SSPS for short) for
equation (\ref{eq:DDDE}).
\begin{defn}
Let $x:\mathbb{R}\to\mathbb{R}$ be a solution of equation (\ref{eq:DDDE}).
$x$ is called a \textit{special symmetric periodic solution (SSPS)}
of (\ref{eq:DDDE}) if $x$ is a periodic solution of (\ref{eq:DDDE})
with minimal period $2$, satisfying 
\begin{equation}
x(t)+x(t-1)=0,\ \forall t\in\mathbb{R}.\label{eq:SSPS2}
\end{equation}
\end{defn}

Similar to the well-known connection established in Kaplan and Yorke
\cite{Kaplan=000026Yorke:1974} for the symmetric periodic solution
between the discrete DDE (\ref{eq:disc_dde}) and the Hamiltonian
system (\ref{eq:KY_ODE}), we establish the following connection between
the distributed DDE (\ref{eq:DDDE}) and the Hamiltonian system (\ref{eq:Hameq}).
\begin{thm}
\label{Thm:iff}Suppose that (f1-3) and (g1-3) hold. Then the following
statements are true.
\begin{enumerate}
\item Let $x:\mathbb{R}\to\mathbb{R}$ be an SSPS of equation (\ref{eq:DDDE}).
Then 
\begin{equation}
\left(x(t),y(t)\right)=\left(x(t),\int_{0}^{1}f(x(t-s))ds\right)\label{eq:Th5}
\end{equation}
is a periodic solution in $\mathbb{R}^{2}$ with minimal period $2$
for Hamiltonian system (\ref{eq:Hameq}). 
\item Let $\left\{ \left(x(t),y(t)\right):t\in\mathbb{R}\right\} \subset\mathbb{R}^{2}$
be a closed orbit with minimal period $2$ around the origin of Hamiltonian
system (\ref{eq:Hameq}). Then $x:\mathbb{R}\to\mathbb{R}$ is an
SSPS of equation (\ref{eq:DDDE}). 
\end{enumerate}
Thus equation (\ref{eq:DDDE}) has an SSPS if and only if Hamiltonian
system (\ref{eq:Hameq}) has a closed orbit in $\mathbb{R}^{2}$ with
minimal period $2$.
\end{thm}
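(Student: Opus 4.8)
The plan is to prove the two implications separately; the final biconditional then follows at once from (1) and (2), together with the observation that a SSPS is in particular a periodic solution of \eqref{eq:Hameq} of minimal period $2$ whose orbit encircles the origin (since $x(t-1)=-x(t)$ forces $x$ to take both signs), and conversely.

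For statement (1) I would start from the given SSPS $x$ and set $y(t)=\int_{0}^{1}f(x(t-s))\,ds$. The substitution $u=t-s$ rewrites this as $y(t)=\int_{t-1}^{t}f(x(u))\,du$, so that \eqref{eq:DDDE} reads $x'(t)=-y(t)$, which is \eqref{eq:Hameq1}. Differentiating the integral gives $y'(t)=f(x(t))-f(x(t-1))$; now I invoke the SSPS relation $x(t-1)=-x(t)$ from \eqref{eq:SSPS2} and the oddness of $f$ in (H) to get $f(x(t-1))=-f(x(t))$, whence $y'(t)=2f(x(t))$, which is \eqref{eq:Hameq2}. Since $x$ has minimal period $2$ and $y$ is built from $x$, the pair $(x,y)$ is $2$-periodic; its minimal period cannot be a proper divisor $2/k<2$, as that would force $x$ itself to have period $2/k$, contradicting minimality of the period of $x$. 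This is the easy direction.

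For statement (2) the core is a symmetry argument in the spirit of Kaplan and Yorke. Because $f$ is odd, the map $\sigma(x,y)=(-x,-y)$ sends solutions of \eqref{eq:Hameq} to solutions. The Hamiltonian \eqref{eq:Ham_func} is $\sigma$-invariant (oddness of $f$ makes $\int_{0}^{x}f$ even), and $xf(x)>0$ makes the origin a strict minimum of the Hamiltonian, so level curves near it are closed curves encircling the origin. I would normalize by a time translation so that the orbit meets the positive $x$-axis at its rightmost point, i.e. $(x(0),y(0))=(a,0)$ with $a=\max x>0$. On the line $y=0$ the energy reduces to $4\int_{0}^{x}f$, which by $xf(x)>0$ is even and strictly monotone on each half-line; hence the level $H=H(a,0)$ meets the $x$-axis exactly at $(\pm a,0)$, so $(-a,0)$ lies on the orbit, attained at some time $t^{\ast}$ with $x(t^{\ast})=-a$, $y(t^{\ast})=0$.

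I would then apply $\sigma$: the solution $(-x(t),-y(t))$ passes through $(-a,0)$ at $t=0$, while $(x(t),y(t))$ passes through $(-a,0)$ at $t=t^{\ast}$, so uniqueness of solutions (from Lipschitz continuity of $f$) forces $(-x(t),-y(t))=(x(t+t^{\ast}),y(t+t^{\ast}))$ for all $t$. Iterating shows $2t^{\ast}$ is a period, so $t^{\ast}\in\{0,1\}\pmod 2$; the value $t^{\ast}=0$ would give $x\equiv 0$, contradicting that the orbit encircles the origin, leaving $t^{\ast}=1$, i.e. $x(t+1)=-x(t)$, which is the antisymmetry \eqref{eq:SSPS2}. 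Finally, to recover \eqref{eq:DDDE} I differentiate $x(t-1)=-x(t)$ to get $x'(t-1)=-x'(t)$, use $y'=2f(x)$ in the form $f(x(u))=-\tfrac12 x''(u)$, and compute $\int_{0}^{1}f(x(t-s))\,ds=\int_{t-1}^{t}f(x(u))\,du=-\tfrac12(x'(t)-x'(t-1))=-x'(t)$, which is exactly \eqref{eq:DDDE}; minimal period $2$ of $x$ follows from that of the orbit as in (1). I expect the main obstacle to be the middle step of (2): correctly pinning down the phase shift as $t^{\ast}=1$ and justifying that the orbit crosses the $x$-axis precisely at $\pm a$, which is where the hypotheses \emph{around the origin} and $xf(x)>0$ are genuinely used.
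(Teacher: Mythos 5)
Your proof is correct, and your part (1) coincides with the paper's argument. In part (2) you follow the same Kaplan--Yorke symmetry strategy as the paper, but you execute the two key steps differently, and both differences are substantive. First, where the paper simply asserts that, the orbit being invariant under $\sigma(x,y)=(-x,-y)$, one has $(x(t+1),y(t+1))=(-x(t),-y(t))$, you actually prove this: you normalize at the rightmost point $(a,0)$, use evenness and strict monotonicity of $F(x)=\int_{0}^{x}f(\xi)d\xi$ on each half-line (from (H)) to see that the level set meets the $x$-axis exactly at $(\pm a,0)$, invoke uniqueness of solutions to identify $\sigma$ with a time shift $t^{*}$, and iterate to pin $t^{*}\equiv 1$ modulo the minimal period; this fills in a step the paper leaves implicit. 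Second, to recover (\ref{eq:DDDE}) from the antisymmetry, the paper integrates $y^{\prime}(t)=f(x(t))-f(x(t-1))$ to get $y(t)=\int_{t-1}^{t}f(x(s))ds+\text{Const}$ and then needs a separate argument to kill the constant: it normalizes $x(\tfrac{1}{2})=0$ and uses the reflection $(-x(-t),y(-t))$ of Lemma \ref{lem:sol_Ham} together with uniqueness to show $x$ is odd about $t=\tfrac{1}{2}$. You avoid this entirely by integrating $x^{\prime\prime}=-2f(x)$ over $[t-1,t]$ and using $x^{\prime}(t-1)=-x^{\prime}(t)$, which yields $x^{\prime}(t)=-\int_{t-1}^{t}f(x(u))du$ directly, with no constant to eliminate and no appeal to the reflection lemma; this is shorter, and since $y=-x^{\prime}$ it also recovers the representation (\ref{eq:Th5}). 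What the paper's route buys in exchange is the explicit reflection symmetry of the solution about $t=\tfrac{1}{2}$, which has some independent interest. One shared loose end: in the closing biconditional both you and the paper pass from ``closed orbit of minimal period $2$'' to ``closed orbit around the origin'' informally; strictly one should note that under (H) the origin is the only equilibrium in $D\times\mathbb{R}$, so any closed orbit of the planar system must encircle it. This gap is no worse in your write-up than in the paper's.
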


\begin{proof}
(1) Let $x$ be an SSPS of equation (\ref{eq:DDDE}). Define
\[
y(t):=\int_{0}^{1}f(x(t-s))ds.
\]
Then 
\[
y^{\prime}(t)=f(x(t))-f(x(t-1)).
\]
From (f3) and the property of SSPS that is (\ref{eq:SSPS2}), one
has 
\[
-f(x(t-1))=f(-x(t-1))=f(x(t)).
\]
Therefore, we see that $\left(x(t),y(t)\right)$ satisfies the Hamiltonian
system (\ref{eq:Hameq}). It is obvious that $\left(x(t),y(t)\right)$
is a closed symmetric orbit around the origin of minimal period $2$.

(2) Let $\left\{ \left(x(t),y(t)\right):t\in\mathbb{R}\right\} $
be a closed symmetric trajectory around the origin of minimal period
$2$ for the Hamiltonian system (\ref{eq:Hameq}). Since $\left\{ \left(-x(t),-y(t)\right):t\in\mathbb{R}\right\} $
represents the same trajectory, one can find that 
\[
\left(x(t+1),y(t+1)\right)=\left(-x(t),-y(t)\right),\ t\in\mathbb{R}.
\]
Since $x(t)$ is of period $2$, the property $x(t-1)=-x(t)$ holds.
Therefore, it follows that, using the condition (f3), $f(x(t))=-f(-x(t))=-f(x(t-1))$,
thus
\[
y^{\prime}(t)=2f(x(t))=f(x(t))-f(x(t-1)),
\]
from which we obtain $y(t)=\int_{t-1}^{t}f(x(s))ds+\text{Const}$.
We show that the constant is zero. Consider a closed orbit around
the origin of system (\ref{eq:Hameq}) satisfying $x(\frac{1}{2})=0$
and $y(\frac{1}{2})>0$. From Lemma \ref{lem:sol_Ham} in Appendix
A, $\left(x,y\right)=\left(-x\left(-t\right),y(-t)\right)$ also solve
(\ref{eq:Hameq}). Therefore, from the uniqueness of the solution
of (\ref{eq:Hameq}), we have $\left(x\left(\frac{1}{2}+t\right),y\left(\frac{1}{2}+t\right)\right)=\left(-x\left(\frac{1}{2}-t\right),y\left(\frac{1}{2}-t\right)\right)$.
Thus $x$ is an odd function about $t=\frac{1}{2}$, i.e., $x(t+\frac{1}{2})=-x(t-\frac{1}{2})$.
It then follows that 
\[
y(t)=\int_{t-1}^{t}f(x(s))ds.
\]
Consequently, the first component of the solution of equation (\ref{eq:Hameq})
yields an SSPS $x(t)$ for distributed DDE (\ref{eq:DDDE}).
\end{proof}

\section{Period map of Hamiltonian system (\ref{eq:Hameq})}

In this section, following \cite[Chapters I, II]{Schaaf: 1990}, we
derive an explicit condition for Hamiltonian system (\ref{eq:Hameq})
to have the periodic orbit of period-2 around the origin. We suppose
that (f1-3) and (g1-3) hold. Consider the solution of the Hamiltonian
system (\ref{eq:Hameq}) with the following initial condition 
\begin{equation}
(x(0),y(0))=(x_{0},0),\ x_{0}\in\mathbb{R}.\label{eq:HamIC}
\end{equation}
The trajectory of the solution $(x(t),y(t))$ with the initial condition
(\ref{eq:HamIC}) is a closed orbit in $\mathbb{R}^{2}$, satisfying
\begin{equation}
2F(x)+G(y)=2F(x_{0}),\label{eq:Ham}
\end{equation}
where 
\[
F(x)=\int_{0}^{x}f(\xi)d\xi,\ G(y)=\int_{0}^{y}g(\xi)d\xi.
\]
It is easy to see that the closed orbit given by (\ref{eq:Ham}) with
$x_{0}\not=0$ is symmetric about the $x$-axis and the $y$-axis.

For $x,\ y\in\mathbb{R}$, we implicitly define two variables $u$
and $v$ via\begin{subequations}\label{eq:uv}
\begin{align}
 & 2F(x)=\frac{1}{2}u^{2},\ \text{sign}x=\text{sign}u,\label{eq:u}\\
 & G(y)=\frac{1}{2}v^{2},\ \text{sign}y=\text{sign}v.\label{eq:v}
\end{align}
\end{subequations}Note that $u$ and $v$ are uniquely determined
and that $(u,v)=(0,0)$ for $(x,y)=(0,0)$.

We suppose that 
\begin{enumerate}
\item[(H1)] There exist
\begin{align*}
 & a=\lim_{x\to0}\frac{f(x)}{x},\ A=\lim_{x\to\infty}\frac{f(x)}{x},\\
 & b=\lim_{y\to0}\frac{g(y)}{y},\ B=\lim_{y\to\infty}\frac{g(y)}{y}
\end{align*}
allowing that those quantities are $0$ and $\infty$. 
\item[(H2)] It holds that 
\[
\lim_{x\to\infty}F(x)=\infty,\ \lim_{x\to\infty}G(x)=\infty.
\]
\end{enumerate}
Note that $0\leq a,\ b,\ A,\ B\leq\infty$ and that $u\to\infty$
(as $x\to\infty)$ and $v\to\infty$ (as $y\to\infty)$, from (f1-2)
and (g1-2).

From (\ref{eq:uv}), it is easy to see that, for $u\not=0$, $x$
is differentiable with respect to $u$. Similarly, for $v\not=0$,
$y$ is differentiable with respect to $v$. Denote by $x_{u}$ the
derivative of $x$ with respect to $u$ and $y_{v}$ the derivative
of $y$ with respect to $v$. Then one has 
\begin{equation}
x_{u}=\frac{u}{2f(x)},\ y_{v}=\frac{v}{g(y)}\label{eq:xuyv}
\end{equation}
for $u\not=0$ and $v\not=0$.
\begin{lem}
Suppose that (H1-2) hold. If $a,b\in\left(0,\infty\right)$, then
$x$ and $y$ are differentiable at $u=0$ and $v=0$, respectively,
and \begin{subequations}\label{eq:xuyv0}
\begin{align}
 & x_{u}(0)=\lim_{u\to0}x_{u}(u)=\frac{1}{\sqrt{2a}},\label{eq:xu0}\\
 & y_{v}(0)=\lim_{v\to0}y_{v}(v)=\frac{1}{\sqrt{b}}.\label{eq:yv0}
\end{align}
\end{subequations}If $A,B\in\left(0,\infty\right)$, then it also
holds that\begin{subequations}\label{eq:xuyvinf}
\begin{align}
 & \lim_{u\to\infty}x_{u}(u)=\frac{1}{\sqrt{2A}},\label{eq:xuinf}\\
 & \lim_{v\to\infty}y_{v}(v)=\frac{1}{\sqrt{B}}.\label{eq:yvinf}
\end{align}
\end{subequations}
\end{lem}

\begin{proof}
First, we consider $x_{u}(0)=\lim_{u\to0}x/u$. From (\ref{eq:u}),
one has 
\[
\lim_{u\to0}\frac{x}{u}=\lim_{x\to0}\frac{x}{2\sqrt{F(x)}}.
\]
By l'H\^{o}pital's rule and (\ref{eq:u}), 
\[
\lim_{x\to0}\frac{x^{2}}{4F(x)}=\lim_{x\to0}\frac{2x}{4f(x)}=\frac{1}{2a}.
\]
Thus we obtain $x_{u}(0)=1/\sqrt{2a}$. One also sees that 
\[
\lim_{u\to0}x_{u}(u)=\lim_{u\to0}\frac{u}{2f(x)}
\]
Then by l'H\^{o}pital's rule and (\ref{eq:u}), 
\[
\lim_{u\to0}\frac{u^{2}}{4f^{2}(x)}=\lim_{x\to0}\frac{F(x)}{f(x)^{2}}=\lim_{x\to0}\frac{F(x)}{x^{2}}\frac{1}{f(x)^{2}/x^{2}}=\lim_{x\to0}\frac{f(x)}{2x}\frac{1}{f(x)^{2}/x^{2}}=\frac{1}{2a}.
\]
Hence we obtain (\ref{eq:xu0}). Similarly, we obtain other equalities
in (\ref{eq:xuyv0}) and (\ref{eq:xuyvinf}).
\end{proof}
From the proof in Lemma 8, we also have the following equalities:
\begin{align*}
\lim_{u\to0}x_{u}(u) & =\begin{cases}
\infty & \text{if }a=0,\\
0 & \text{if }a=\infty,
\end{cases} & \lim_{u\to\infty}x_{u}(u) & =\begin{cases}
\infty & \text{if }A=0,\\
0 & \text{if }A=\infty,
\end{cases}\\
\lim_{v\to0}y_{v}(v) & =\begin{cases}
\infty & \text{if }b=0,\\
0 & \text{if }b=\infty,
\end{cases} & \lim_{v\to\infty}y_{v}(v) & =\begin{cases}
\infty & \text{if }B=0,\\
0 & \text{if }B=\infty.
\end{cases}
\end{align*}

Since $x^{\prime}=x_{u}u^{\prime}$ and $y^{\prime}=y_{v}v^{\prime}$,
where $u^{\prime}$ and $v^{\prime}$ are derivative of $u$ and $v$
with respect to $t$, from (\ref{eq:Hameq}) and (\ref{eq:xuyv}),
one obtains 
\begin{equation}
x_{u}y_{v}u^{\prime}=-v,\ x_{u}y_{v}v^{\prime}=u.\label{eq:uveq}
\end{equation}
Consider the time transformation defined by 
\[
\frac{dt}{d\theta}=x_{u}y_{v},\ t(0)=0,
\]
then, from (\ref{eq:uveq}), we obtain the following equations:
\begin{align*}
 & \frac{du}{d\theta}=-v,\ \frac{dv}{d\theta}=u.
\end{align*}
Hence we have 
\[
(u,v)=(p\cos\theta,p\sin\theta),
\]
where $p=2\sqrt{F(x_{0})}$ from (\ref{eq:u}). From (f3) and (g3),
the period of the solution $T=T(p)$ is given as 
\[
T(p)=4\int_{0}^{\frac{\pi}{2}}x_{u}(p\cos\theta)y_{v}(p\sin\theta)d\theta.
\]

Finally, applying the Lebesgue\textquoteright s Dominated Convergence
Theorem, as in \cite[Proposition 1.6.1]{Schaaf: 1990} and \cite[Section 5]{Villadelprat1:2020},
we obtain the asymptotic behaviour of the period map as follows.
\begin{lem}
Suppose that (H1-2) hold. One has 
\[
\lim_{p\to+0}T(p)=\sqrt{\frac{2}{ab}}\pi,\ \lim_{p\to\infty}T(p)=\sqrt{\frac{2}{AB}}\pi,
\]
with the convention $\frac{1}{0}=\infty$ and $\frac{1}{\infty}=0$.
\end{lem}

Therefore, we obtain the existence of the period-2 solution to (\ref{eq:DDDE}),
according to Theorem \ref{Thm:iff}.
\begin{thm}
\label{thm:SPPScond}Suppose that (f1-3), (g1-3) and (H1-2) hold.
If $a,b,A$ and $B$ satisfy either
\[
ab<\frac{\pi^{2}}{2}<AB,\text{ or }AB<\frac{\pi^{2}}{2}<ab,
\]
then system (\ref{eq:Hameq}) has a closed orbit with minimal period
$2$, thus equation (\ref{eq:DDDE}) has an SSPS.
\end{thm}

\section{Examples}

In this section we study distributed DDE (\ref{eq:DDDE}) with particular
nonlinear functions. We express the periodic solutions in terms of
the Jacobi elliptic functions. 

Let us introduce the complete elliptic integrals of the first kind
and of the second kind respectively given as 
\begin{align*}
K(k) & =\int_{0}^{\frac{\pi}{2}}\frac{1}{\sqrt{1-k^{2}\sin^{2}\theta}}d\theta,\\
E(k) & =\int_{0}^{\frac{\pi}{2}}\sqrt{1-k^{2}\sin^{2}\theta}d\theta
\end{align*}
for $0\leq k<1$. We refer \cite{Byrd,Meyer:2001} for the fundamental
properties of the Jacobi elliptic functions. 

\subsection{$f(x)=r\sin(x)$ and $g(x)=x$}

In \cite{Nakata:2021}, we consider the existence of an SSPS to the
following equation: 
\begin{equation}
x^{\prime}(t)=-g\left(\int_{0}^{1}x(t-s)ds\right),\label{eq:Ex1}
\end{equation}
with $g(x)=r\sin x,$ where $r>0$. In Appendix B, we show a transformation
between equations (\ref{eq:Ex1}) and (\ref{eq:DDDE}) with $g(x)=x$.
Here, we consider the existence of an SSPS of (\ref{eq:DDDE}) with
$f(x)=r\sin x$ and $g(x)=x$. One can see that the system of ODEs
(\ref{eq:Hameq}) with $f(x)=r\sin x$ and $g(x)=x$ is a well-known
pendulum equation. We denote by $\text{cn},\ \text{sn},\ \text{dn}$
Jacobi elliptic functions. See \cite{Byrd,Meyer:2001} for the fundamental
properties of the Jacobi elliptic functions. We then obtain the following
result.
\begin{thm}
\label{thm:Ex1}Let $f(x)=r\sin x,\ r>0$ and $g(x)=x$. Equation
(\ref{eq:DDDE}) has an SSPS if and only if $r>\pi^{2}/2$. The SSPS
is expressed as 
\begin{equation}
x(t)=2\arcsin\left(k\:\text{sn}\left(\sqrt{2r}\:t,k\right)\right),\label{eq:SSPS_sin}
\end{equation}
where $k\in\left(0,1\right)$ is uniquely determined as the solution
of the following equation: 
\begin{equation}
\frac{2K\left(k\right)}{\sqrt{2r}}=1.\label{eq:PerSSPS}
\end{equation}
\end{thm}

\begin{proof}
We seek an SSPS $x(t)$ satisfying $-\pi<x(t)<\pi$ for $t\in\mathbb{R}$
for (\ref{eq:DDDE}), applying Theorem \ref{Thm:iff}. In Appendix
C, we compute a periodic orbit for (\ref{eq:Hameq}), surrounding
the origin. (\ref{eq:Hameq}) has a periodic solution given as in
(\ref{eq:pensol}) in Lemma \ref{lem:pend}, where the period is $T=T(k)$
given as in (\ref{eq:Per_Ex1}). Since $K(k)$ is an increasing function
of $k$ with $K(0)=\pi/2$, there exists a unique $k\in(0,1)$ such
that $T(k)=2$ if and only if $r>\pi^{2}/2$. Thus, for $r>\pi^{2}/2$,
one can see that there exists a unique $k\in(0,1)$ such that (\ref{eq:PerSSPS})
holds and that (\ref{eq:SSPS_sin}) is the SSPS for (\ref{eq:DDDE}).
\end{proof}

\subsection{$f(x)=r\left(e^{x}-1\right)$ and $g(x)=x$}

In \cite{Nakata:2018} we study logistic equation with distributed
delay (\ref{eq:logDDE}). Changing the variable $x\mapsto\log x$,
we obtain equation (\ref{eq:DDDE}) with $f(x)=r(e^{x}-1)$ and $g(x)=x$.
Note that $f(x)$ is \uline{not} an odd function, thus we may not
directly apply Theorem \ref{Thm:iff} to obtain the period-2 solution
of (\ref{eq:DDDE}).

Consider the existence of a period-2 solution, satisfying 
\begin{equation}
x(t)+x(t-1)=c,\label{eq:c_symm}
\end{equation}
where $c$ denotes a constant. The period-2 solution satisfies the
following system of ODEs:\begin{subequations}\label{eq:Ham2}
\begin{align}
x^{\prime} & =-y,\label{eq:Ham21}\\
y^{\prime} & =f(x)-f(c-x),\label{eq:Ham22}
\end{align}
\end{subequations}(see also the proof of Proposition \ref{prop:per2}).
Note that $c$ is a parameter to be determined. 

Let $w=x-\frac{1}{2}c$. Then system (\ref{eq:Ham2}) becomes 
\begin{align*}
w^{\prime} & =-y,\\
y^{\prime} & =f\left(w+\frac{1}{2}c\right)-f\left(\frac{1}{2}c-w\right).
\end{align*}
For $f(x)=r(e^{x}-1)$, we see 
\begin{align*}
f\left(w+\frac{1}{2}c\right)-f\left(\frac{1}{2}c-w\right) & =r\left(e^{w+\frac{1}{2}c}-1\right)-r\left(e^{-w+\frac{1}{2}c}-1\right)\\
 & =2re^{\frac{1}{2}c}\sinh w.
\end{align*}
Thus we obtain the following system of ODEs:\begin{subequations}\label{eq:wy}
\begin{align}
w^{\prime} & =-y,\label{eq:wy1}\\
y^{\prime} & =2re^{\frac{1}{2}c}\sinh w.\label{eq:wy2}
\end{align}
\end{subequations}

Let us first find a period-2 solution to system (\ref{eq:wy}). Consider
the periodic solution of (\ref{eq:wy}) with the following initial
condition 
\begin{equation}
\left(w(0),y(0)\right)=\left(w_{0},0\right),\ w_{0}>0.\label{eq:wyic}
\end{equation}
Let 
\[
\gamma=re^{\frac{1}{2}c},\quad\alpha=\sinh\left(\frac{w_{0}}{2}\right)>0.
\]
Define 
\begin{equation}
\beta=\sqrt{2\gamma(1+\alpha^{2})},\quad k=\frac{\alpha}{\sqrt{1+\alpha^{2}}}\in\left(0,1\right).\label{eq:betaK}
\end{equation}
Note that 
\begin{equation}
\beta k=\alpha\sqrt{2\gamma},\quad\alpha=\frac{k}{\sqrt{1-k^{2}}}\label{eq:betak2}
\end{equation}
hold. In terms of $\beta$ and $k$, the solution of (\ref{eq:wy})
is expressed as follows, see Appendix C for the proof.
\begin{lem}
\label{lem:Sol_wy}The solution of system (\ref{eq:wy}) with the
initial condition (\ref{eq:wyic}) is expressed as\begin{subequations}\label{eq:wysol}
\begin{align}
 & w(t)=\log\left(\frac{\text{dn}\left(\beta t,k\right)+k\text{cn}\left(\beta t,k\right)}{\text{dn}\left(\beta t,k\right)-k\text{cn}\left(\beta t,k\right)}\right),\label{eq:wysol1}\\
 & y(t)=2\beta k\text{sn}\left(\beta t,k\right).\label{eq:wysol2}
\end{align}
\end{subequations}
\end{lem}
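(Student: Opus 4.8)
The plan is to verify directly that the pair $(w(t),y(t))$ displayed in (\ref{eq:wysol}) solves system (\ref{eq:wy}) together with the initial condition (\ref{eq:wyic}). Since the right-hand side of (\ref{eq:wy}) is smooth in $(w,y)$, the solution through $(a,0)$ is unique, so it suffices to match the two equations and the initial data; the closed-form expression then \emph{is} that orbit. Throughout I abbreviate $\text{sn} = \text{sn}(\beta t,k)$ and likewise $\text{cn},\text{dn}$, and use the standard derivative rules $\frac{d}{du}\text{sn} = \text{cn}\,\text{dn}$, $\frac{d}{du}\text{cn} = -\text{sn}\,\text{dn}$, $\frac{d}{du}\text{dn} = -k^2\text{sn}\,\text{cn}$ (in the argument $u=\beta t$, producing an extra factor $\beta$ under $d/dt$), together with $\text{cn}^2 = 1-\text{sn}^2$ and $\text{dn}^2 = 1-k^2\text{sn}^2$.

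First I would check the initial data. At $t=0$ one has $\text{sn}=0$ and $\text{cn}=\text{dn}=1$, so $y(0)=0$ and $w(0) = \log\frac{1+k}{1-k} = 2\,\text{artanh}(k)$. Using $k=\alpha/\sqrt{1+\alpha^2}$ from (\ref{eq:betaK}) and $\alpha = \sinh(a/2)$, a short computation gives $\text{artanh}(k) = \text{arsinh}(\alpha) = a/2$, whence $w(0)=a$ as required.

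Next, writing $N = \text{dn}+k\,\text{cn}$ and $M = \text{dn}-k\,\text{cn}$ so that $w=\log(N/M)$, I would differentiate logarithmically. The derivative rules give $N' = -\beta k\,\text{sn}\cdot N$ and $M' = \beta k\,\text{sn}\cdot M$, so the two logarithmic derivatives telescope: $w' = N'/N - M'/M = -2\beta k\,\text{sn} = -y$, which is (\ref{eq:wy1}). For (\ref{eq:wy2}) I would compute $y' = 2\beta^2 k\,\text{cn}\,\text{dn}$ and compare with $2\gamma\sinh w$. From $e^{\pm w}=N/M,\,M/N$ one gets $\sinh w = (N^2-M^2)/(2MN)$ with $N^2-M^2 = 4k\,\text{cn}\,\text{dn}$; the decisive step is that $MN = \text{dn}^2 - k^2\text{cn}^2 = (1-k^2\text{sn}^2)-k^2(1-\text{sn}^2) = 1-k^2$ is constant, so $\sinh w = 2k\,\text{cn}\,\text{dn}/(1-k^2)$. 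Matching prefactors then requires $\beta^2 = 2\gamma/(1-k^2)$, which holds because $\beta^2 = 2\gamma(1+\alpha^2)$ and $1-k^2 = 1/(1+\alpha^2)$ by (\ref{eq:betaK}).

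The main obstacle, and the only genuinely nonroutine point, is spotting that $MN=\text{dn}^2-k^2\text{cn}^2$ collapses to the constant $1-k^2$: this is what reduces $\sinh w$ to the single product $\text{cn}\,\text{dn}$ and lets the prefactors cancel exactly against the defining relations for $\beta$ and $k$ in (\ref{eq:betaK})--(\ref{eq:betak2}). Everything else is bookkeeping with the elliptic derivative identities. One could instead \emph{derive} the formula from the conserved energy $\tfrac{1}{2}y^2 + 2\gamma\cosh w = 2\gamma\cosh a$ and the resulting quadrature $\int dw/\sqrt{\cosh a - \cosh w}$, but reducing that integral to standard form needs precisely the half-angle substitution $\sinh(w/2) = \alpha\,\text{sn}$, so direct verification is the more economical route.
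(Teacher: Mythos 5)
Your proof is correct, but it takes a genuinely different route from the paper's. The paper \emph{derives} the formula: starting from the conserved Hamiltonian it introduces $u=\sinh(w/2)$, obtains $(u^{\prime})^{2}=2\gamma(1+u^{2})(\alpha^{2}-u^{2})$, solves this by a coefficient-matching ansatz $u=\sqrt{c_{1}}\,\text{cn}(\sqrt{c_{2}}t,\sqrt{c_{3}})$ (Lemma \ref{lem:Ex2} in Appendix C.2, giving $u=\alpha\,\text{cn}(\beta t,k)$), and then recovers $w$ by inverting $\sinh$ and rewriting $2\log\bigl(\bigl(k\text{cn}+\text{dn}\bigr)/\sqrt{1-k^{2}}\bigr)$ as the stated log-ratio, and recovers $y$ from the energy relation $y^{2}=8\gamma\alpha^{2}\text{sn}^{2}$. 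You instead \emph{verify} the stated closed form directly: the logarithmic-derivative telescoping $N^{\prime}=-\beta k\,\text{sn}\,N$, $M^{\prime}=\beta k\,\text{sn}\,M$ gives $w^{\prime}=-y$, and the identity $MN=\text{dn}^{2}-k^{2}\text{cn}^{2}=1-k^{2}$ together with $\beta^{2}=2\gamma/(1-k^{2})$ gives $y^{\prime}=2\gamma\sinh w$; uniqueness for the Lipschitz planar system then closes the argument. Note that the identity you single out as the crux is the same one the paper uses to pass from the $\sinh^{-1}$ form to the log-ratio, so the two arguments share their key algebraic ingredient, just deployed in opposite directions. Your route is shorter and self-contained (it bypasses Lemma \ref{lem:Ex2} and the quadrature entirely); the paper's route is constructive rather than verificational, and its intermediate representation $u=\alpha\,\text{cn}(\beta t,k)$ is what immediately yields the period formula $T=4K(k)/\beta$ in (\ref{eq:Per_Ex2}), which is used right after the lemma to impose the period-$2$ condition (\ref{eq:betak_cond}); a purely verificational proof like yours would need a separate argument (e.g.\ the known periodicity and zero structure of $\text{sn}$, $\text{cn}$, $\text{dn}$) to establish that minimal period.
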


The period of the solution (\ref{eq:wysol}) for (\ref{eq:wy}) is
2 if and only if 
\begin{align}
 & \beta=2K(k),\label{eq:betak_cond}
\end{align}
where $K$ denotes the complete elliptic integrals of the first kind.

Furthermore, $x=w+\frac{1}{2}c$ satisfies (\ref{eq:Th5}) with $y(0)=0$
if and only if 
\[
0=-\int_{0}^{1}f(x(-s))ds.
\]
Finally, we obtain the following result.
\begin{thm}
\label{thm:Ex2}Let $f(x)=r(e^{x}-1),\ r>0$ and $g(x)=x$. Equation
(\ref{eq:DDDE}) has a period-2 solution, satisfying (\ref{eq:c_symm})
for some $c\in\mathbb{R}$, if and only if $r>\pi^{2}/2$. The period-2
solution is expressed as 
\begin{equation}
x(t)=\log\left(\frac{K(k)\left(\text{dn}\left(2K(k)t,k\right)+k\text{cn}\left(2K(k)t,k\right)\right)^{2}}{2E(k)-K(k)(1-k^{2})}\right),\label{eq:x(t)_log}
\end{equation}
where $k\in\left(0,1\right)$ is a unique solution of 
\begin{equation}
r=2K(k)\left(2E(k)-K(k)(1-k^{2})\right)\label{eq:req_Ex2}
\end{equation}
for $r>\pi^{2}/2$.
\end{thm}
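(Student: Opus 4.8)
The plan is to reduce the period-$2$ ansatz---which by Proposition~\ref{prop:per2} amounts to $x(t)+x(t-1)\equiv c$ for a constant $c$---to the explicitly solvable planar system (\ref{eq:wy}), read the orbit off from Lemma~\ref{lem:Sol_wy}, and then fix the two free parameters, the modulus $k$ and the shift $c$, by imposing the period-$2$ condition together with the single scalar compatibility requirement that turns the ODE orbit into an honest solution of (\ref{eq:DDDE}). First I would combine $\beta=2K(k)$ from (\ref{eq:betak_cond}) with the parameter relations (\ref{eq:betaK})--(\ref{eq:betak2}). Since $1+\alpha^{2}=1/(1-k^{2})$, the definition $\beta=\sqrt{2\gamma(1+\alpha^{2})}$ gives $\beta^{2}=2\gamma/(1-k^{2})$, so $\beta=2K(k)$ forces $\gamma=2K(k)^{2}(1-k^{2})$; recalling $\gamma=re^{c/2}$, this is the first relation
\[
re^{c/2}=2K(k)^{2}(1-k^{2}).
\]

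Next I would use the compatibility condition recorded just above the statement, $\int_{0}^{1}f(x(-s))\,ds=0$, which for $f(x)=r(e^{x}-1)$ is simply $\int_{0}^{1}e^{x(-s)}\,ds=1$. Writing $x=w+\tfrac{1}{2}c$ with $w$ even, and using $\text{dn}^{2}-k^{2}\text{cn}^{2}=1-k^{2}$ to rewrite $e^{w}=(\text{dn}+k\,\text{cn})^{2}/(1-k^{2})$, the substitution $u=2K(k)t$ turns the condition into $e^{c/2}(1-k^{2})^{-1}(2K)^{-1}\int_{0}^{2K}(\text{dn}+k\,\text{cn})^{2}\,du=1$. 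Evaluating this with the standard primitives $\int\text{dn}^{2}=E$, $\int\text{cn}\,\text{dn}=\text{sn}$ and $\int\text{sn}^{2}=(u-E)/k^{2}$ gives $\int_{0}^{2K}(\text{dn}+k\,\text{cn})^{2}\,du=2\bigl(2E(k)-K(k)(1-k^{2})\bigr)$, hence the second relation $e^{c/2}=K(k)(1-k^{2})/\bigl(2E(k)-K(k)(1-k^{2})\bigr)$. Dividing the first relation by this eliminates $e^{c/2}$ and yields exactly (\ref{eq:req_Ex2}), $r=2K(k)\bigl(2E(k)-K(k)(1-k^{2})\bigr)$; substituting the value of $e^{c/2}$ into $e^{x}=e^{c/2}e^{w}=e^{c/2}(\text{dn}+k\,\text{cn})^{2}/(1-k^{2})$ (with elliptic argument $2K(k)t$) collapses it to the closed form (\ref{eq:x(t)_log}). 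Since the two sides of (\ref{eq:Th5}) have the same derivative, matching them at $t=0$ where $y(0)=0$ forces agreement for all $t$, so this $x$ genuinely solves (\ref{eq:DDDE}); it has minimal period $2$ because $w(t+1)=-w(t)\not\equiv w(t)$, and is therefore a SSPS.

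It remains to settle solvability of (\ref{eq:req_Ex2}), which I regard as the heart of the matter. Set $g(k):=2K(k)\bigl(2E(k)-K(k)(1-k^{2})\bigr)$ on $(0,1)$. From $K(0)=E(0)=\pi/2$ one gets $g(0^{+})=\pi^{2}/2$, while $K(k)\to\infty$, $E(k)\to1$ and $(1-k^{2})K(k)\to0$ as $k\to1^{-}$ give $g(1^{-})=\infty$. To pin down the threshold and uniqueness I would show $g$ is strictly increasing. With $p(k):=E(k)-K(k)(1-k^{2})$, the Legendre derivative formulas $E'=(E-K)/k$ and $K'=\bigl(E-(1-k^{2})K\bigr)/\bigl(k(1-k^{2})\bigr)$ telescope to the clean identity $p'(k)=kK(k)>0$; since $p(0)=0$ this gives $p>0$ on $(0,1)$, whence $K'=p/\bigl(k(1-k^{2})\bigr)>0$ and the second factor $2E-K(1-k^{2})=E+p>0$ has derivative $p/k>0$. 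Thus $g$ is a product of positive increasing functions, hence strictly increasing, and $g\colon(0,1)\to(\pi^{2}/2,\infty)$ is a bijection: a unique admissible $k$ exists precisely when $r>\pi^{2}/2$, and none otherwise. The only genuinely delicate step is this monotonicity-and-asymptotics analysis---the algebraic eliminations and the elliptic integral are routine once $\text{dn}^{2}-k^{2}\text{cn}^{2}=1-k^{2}$ and the Legendre formulas are available, but it is the positivity $p'=kK$ together with the boundary limits that produces the sharp constant $\pi^{2}/2$.
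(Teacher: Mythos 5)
Your proposal is correct, and its skeleton is the same as the paper's: reduce via the period-2 ansatz $x(t)+x(t-1)=c$ to system (\ref{eq:wy}), take the explicit orbit from Lemma \ref{lem:Sol_wy}, impose $\beta=2K(k)$ to get $re^{c/2}=2K^{2}(k)(1-k^{2})$, impose the compatibility condition $\int_{0}^{1}\left(e^{x}-1\right)dt=0$ and evaluate the elliptic integral (your value $\int_{0}^{2K}(\text{dn}+k\,\text{cn})^{2}du=2\left(2E-K(1-k^{2})\right)$ agrees with the paper's computation via $\int_{0}^{1}\text{dn}^{2}(\beta t,k)dt=E/K$ and $\int_{0}^{1}\text{cn}\,\text{dn}\,dt=0$), then eliminate $e^{c/2}$ to obtain (\ref{eq:req_Ex2}) and (\ref{eq:x(t)_log}). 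Where you genuinely depart from the paper is the solvability of (\ref{eq:req_Ex2}): the paper disposes of this step by citing Lemma 6 of the author's earlier work \cite{Nakata:2018}, whereas you prove it from scratch, setting $p(k)=E(k)-(1-k^{2})K(k)$, deriving $p^{\prime}(k)=kK(k)>0$ from the Legendre formulas, concluding that both factors of $g(k)=2K(k)\left(2E(k)-(1-k^{2})K(k)\right)$ are positive and strictly increasing, and computing the boundary values $g(0^{+})=\pi^{2}/2$, $g(1^{-})=\infty$. (I checked the identity $p^{\prime}=kK$ and the derivative $\frac{d}{dk}\left(E+p\right)=p/k$; both are correct.) This buys a self-contained proof of the sharp threshold $r>\pi^{2}/2$ and of uniqueness of $k$, making the theorem independent of the external reference; the paper's route is shorter but leaves the monotonicity hidden in the citation. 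One small point to tighten: your claim of minimal period $2$ from $w(t+1)=-w(t)\not\equiv w(t)$ only rules out period $1$; to exclude smaller periods, note that $u(t)=\alpha\text{cn}(2K(k)t,k)$ is strictly monotone on $[0,1]$ (or invoke that the minimal period of $\text{cn}(\cdot,k)$ is $4K(k)$), which settles it.
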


\begin{proof}
From Lemma \ref{lem:Sol_wy}, we can express $x(t)$ as
\begin{align}
 & x(t)=\log\left(\frac{\text{dn}\left(\beta t,k\right)+k\text{cn}\left(\beta t,k\right)}{\text{dn}\left(\beta t,k\right)-k\text{cn}\left(\beta t,k\right)}\right)+\frac{1}{2}c.\label{eq:x_sol}
\end{align}
We determine two parameters $k$ and $c$ so that (\ref{eq:x_sol})
is a period-$2$ solution of equation (\ref{eq:DDDE}). 

Equation (\ref{eq:betak_cond}) holds if and only if the period of
(\ref{eq:wysol1}) and (\ref{eq:wysol2}) is 2. We thus assume that
(\ref{eq:betak_cond}) holds. Then 
\begin{equation}
re^{\frac{1}{2}c}=2K^{2}(k)(1-k^{2}).\label{eq:rc_rel}
\end{equation}
Furthermore we require that 
\[
y(t)=\int_{0}^{1}f(x(t-s))ds
\]
holds for $t\in\mathbb{R}$, where $y(t)$ is (\ref{eq:wysol2}).

Since, from (\ref{eq:betak_cond}), $y(1)=2\beta k\text{sn}\left(\beta,k\right)=0$
follows, we must have
\begin{equation}
0=\int_{0}^{1}\left(e^{x(t)}-1\right)dt.\label{eq:condpr}
\end{equation}
Substituting (\ref{eq:x_sol}) into (\ref{eq:condpr}), we obtain
\begin{align*}
0 & =e^{\frac{1}{2}c}\int_{0}^{1}\frac{\text{dn}\left(\beta t,k\right)+k\text{cn}\left(\beta t,k\right)}{\text{dn}\left(\beta t,k\right)-k\text{cn}\left(\beta t,k\right)}dt-1.
\end{align*}
We now compute 
\begin{align*}
\frac{\text{dn}\left(\beta t,k\right)+k\text{cn}\left(\beta t,k\right)}{\text{dn}\left(\beta t,k\right)-k\text{cn}\left(\beta t,k\right)} & =\frac{\left(\text{dn}\left(\beta t,k\right)+k\text{cn}\left(\beta t,k\right)\right)^{2}}{1-k^{2}}\\
 & =\frac{2\text{dn}^{2}\left(\beta t,k\right)+2k\text{dn}\left(\beta t,k\right)\text{cn}\left(\beta t,k\right)}{1-k^{2}}-1.
\end{align*}
Then, we obtain
\[
\int_{0}^{1}\frac{\text{dn}\left(\beta t,k\right)+k\text{cn}\left(\beta t,k\right)}{\text{dn}\left(\beta t,k\right)-k\text{cn}\left(\beta t,k\right)}dt=\frac{2}{1-k^{2}}\frac{E(k)}{K(k)}-1,
\]
thus 
\begin{equation}
e^{\frac{1}{2}c}\left(\frac{2}{1-k^{2}}\frac{E(k)}{K(k)}-1\right)-1=0.\label{eq:EK}
\end{equation}
Here, we use 
\[
\int_{0}^{1}\text{dn}^{2}\left(\beta t,k\right)dt=\frac{2}{\beta}E(k)=\frac{E(k)}{K(k)},
\]
(See also 314.02 in \cite{Byrd} for the former identity) and 
\[
\int_{0}^{1}\text{dn}\left(\beta t,k\right)\text{cn}\left(\beta t,k\right)dt=\frac{1}{\beta}\left[\text{sn}\left(\beta t,k\right)\right]_{t=0}^{t=1}=0,
\]
from the condition (\ref{eq:betak_cond}). 

From (\ref{eq:rc_rel}) and (\ref{eq:EK}), we obtain equation (\ref{eq:req_Ex2}).
From Lemma 6 in \cite{Nakata:2018}, one can see that (\ref{eq:req_Ex2})
has a unique solution for $r>\pi^{2}/2$. Then, from (\ref{eq:rc_rel}),
we obtain that 
\[
\frac{1}{2}c=\log\frac{2K^{2}(k)(1-k^{2})}{r}=\log\frac{K(k)(1-k^{2})}{2E(k)-K(k)(1-k^{2})}.
\]
Finally, from (\ref{eq:x_sol}), it follows that 
\begin{align*}
x(t) & =\log\left(\frac{\text{dn}\left(\beta t,k\right)+k\text{cn}\left(\beta t,k\right)}{\text{dn}\left(\beta t,k\right)-k\text{cn}\left(\beta t,k\right)}\frac{K(k)(1-k^{2})}{2E(k)-K(k)(1-k^{2})}\right)\\
 & =\log\left(\frac{K(k)\left(\text{dn}\left(\beta t,k\right)+k\text{cn}\left(\beta t,k\right)\right)^{2}}{2E(k)-K(k)(1-k^{2})}\right).
\end{align*}
Using (\ref{eq:betak_cond}), we obtain the expression (\ref{eq:x(t)_log}).
\end{proof}

\subsection*{Acknowledgement}

The author would like to thank the editor and the anonymous referee who provided
thoughtful comments which improved the original manuscript. The author
is grateful to Prof. Satoshi Tanaka (Tohoku University) for introducing
the book \cite{Schaaf: 1990} to the author. The author was supported
by JSPS Kakenhi Grant Number 20K03734 and 19K21836.

\appendix

\section{Symmetry of the solution of Hamiltonian system (\ref{eq:Hameq})
with odd nonlinearity}

In this section we suppose that (f1-3) and (g1-3) hold. Consider the
Hamiltonian system of ODEs (\ref{eq:Hameq}). 
\begin{lem}
\label{lem:sol_Ham}Suppose that (f1-3) and (g1-3) hold. Let $(x(t),y(t))$
be a solution of (\ref{eq:Hameq}). Then $\left(-x(t),-y(t)\right),\ \left(x(-t),-y(-t)\right),\ \left(-x\left(-t\right),y(-t)\right)$
are also solutions of (\ref{eq:Hameq}). 
\end{lem}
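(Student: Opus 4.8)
The plan is to verify each of the three claimed symmetries by direct substitution into system (\ref{eq:Hameq}), using the chain rule to absorb the time reversal $t\mapsto -t$ and the oddness $f(-x)=-f(x)$ from (H) to absorb the sign flip in $x$. Rather than treating the three cases as independent, I would first isolate two ``generating'' symmetries and then obtain the remaining transformation as their composition, which keeps the bookkeeping transparent.

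First I would check the point reflection $(x(t),y(t))\mapsto(-x(t),-y(t))$. Writing $u=-x$ and $v=-y$, the first equation reads $u'=-x'=y=-v$, and the second reads $v'=-y'=-2f(x)=2f(-x)=2f(u)$, where the final equality is precisely the oddness in (H). Thus $(-x,-y)$ solves (\ref{eq:Hameq}). Next I would check the time reversal $(x(t),y(t))\mapsto(x(-t),-y(-t))$: setting $u(t)=x(-t)$ and $v(t)=-y(-t)$, the chain rule gives $u'(t)=-x'(-t)=y(-t)=-v(t)$ and $v'(t)=y'(-t)=2f(x(-t))=2f(u(t))$. Note that this reversibility holds for \emph{any} $f$ and does not invoke (H).

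Finally, composing these two symmetries yields $(x(t),y(t))\mapsto(-x(-t),y(-t))$, which is the third claimed solution; alternatively one verifies it directly by the same two-line computation, now using oddness again in the $y'$-equation to turn $-2f(x(-t))$ into $2f(-x(-t))$. Together with the identity, the three listed maps are exactly the group generated by these two involutions, so establishing the two generators settles all cases at once.

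I do not expect a genuine obstacle here: the argument is routine once the sign conventions are fixed, and the entire content is careful bookkeeping with the chain rule. The only conceptual point worth flagging is the dichotomy between the two generators—the point reflection through the origin, which \emph{requires} the oddness assumption in (H), and the time-reversal symmetry, which is automatic for a Hamiltonian system of this reversible form. This lemma is what supplies the reflection symmetry $\left(x(\tfrac12+t),y(\tfrac12+t)\right)=\left(-x(\tfrac12-t),y(\tfrac12-t)\right)$ used to pin down the integration constant in the proof of Theorem \ref{Thm:iff}(2).
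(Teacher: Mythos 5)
Your proof is correct and takes essentially the same approach as the paper: the paper states that the lemma follows ``by direct substitution'' and omits the details, which is exactly the verification you carry out, with your reduction of the third case to a composition of the two generating involutions being a harmless organizational refinement. Your side observations (time reversal needs no oddness, while the reflections do, and the third symmetry is what pins down the constant in Theorem \ref{Thm:iff}(2)) are also accurate.
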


By direct substitution, one can prove Lemma \ref{lem:sol_Ham} using
the symmetric condition on $f$ and $g$. We here omit the detail.
\begin{prop}
Suppose that (f1-3) and (g1-3) hold. Let $(x(t),y(t))$ be a periodic
solution of (\ref{eq:Hameq}) of period $T$ with the initial condition
(\ref{eq:HamIC}). Then the following statements are true.
\begin{enumerate}
\item $x(t)$ is an even function of $t$ and $y(t)$ is an odd function
of $t$. 
\item $x\left(\frac{1}{4}T+t\right)$ is an odd function of $t$ and $y\left(\frac{1}{4}T+t\right)$
is an even function of $t$. 
\end{enumerate}
In particular, one has 
\begin{align*}
\left(x\left(t\right),y(t)\right) & =\left(-x\left(\frac{1}{2}T-t\right),y\left(\frac{1}{2}T-t\right)\right)\\
 & =\left(-x\left(\frac{1}{2}T+t\right),-y\left(\frac{1}{2}T+t\right)\right)\\
 & =\left(x\left(T-t\right),-y\left(T-t\right)\right).
\end{align*}

\end{prop}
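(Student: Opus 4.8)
My plan is to obtain all three conclusions from the reversal symmetries listed in Lemma~\ref{lem:sol_Ham} combined with uniqueness for the initial value problem associated with (\ref{eq:Hameq}); uniqueness is available because $f$ is Lipschitz, exactly as in the proof of Proposition~\ref{prop:per2}. In each step a symmetry from the Lemma produces a second solution of (\ref{eq:Hameq}); I evaluate that solution at a suitable time, observe that it carries the same data as $(x(t),y(t))$ or as an appropriate time translate, and then invoke uniqueness to force the two solutions to agree identically in $t$. The only input beyond this bookkeeping is the geometry recorded just above the statement: the trajectory through (\ref{eq:HamIC}) is a simple closed curve, symmetric in both axes, traversed once in the minimal period $T$.

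For statement~(1) I would take the solution $(x(-t),-y(-t))$ provided by Lemma~\ref{lem:sol_Ham}. At $t=0$ it equals $(x(0),-y(0))=(a,0)$, which is precisely the initial datum (\ref{eq:HamIC}), so uniqueness gives $(x(-t),-y(-t))=(x(t),y(t))$ for every $t$. Reading off the two components yields $x(-t)=x(t)$ and $y(-t)=-y(t)$, i.e. $x$ is even and $y$ is odd.

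Before treating statement~(2) I would establish the half-period antipodal relation, which also pins down the orbit at the quarter period. The solution $(-x(t),-y(t))$ passes through the antipodal point $(-a,0)$ at $t=0$, while the original solution reaches $(-a,0)$ at some instant $t^{\ast}\in(0,T)$; matching data at $t^{\ast}$ and using uniqueness gives $(x(t+t^{\ast}),y(t+t^{\ast}))=(-x(t),-y(t))$. Iterating shows that $2t^{\ast}$ is a period, so minimality of $T$ forces $t^{\ast}=\tfrac12 T$, whence $x(t+\tfrac12 T)=-x(t)$ and $y(t+\tfrac12 T)=-y(t)$. Feeding the evenness of $x$ from~(1) into this relation gives $x(\tfrac12 T-t)=-x(t)$, and the choice $t=\tfrac14 T$ collapses this to $x(\tfrac14 T)=-x(\tfrac14 T)$, so $x(\tfrac14 T)=0$: the orbit meets the $y$-axis exactly at the quarter period.

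With $x(\tfrac14 T)=0$ secured, statement~(2) is the same device applied to the translate $(u(t),v(t)):=(x(t+\tfrac14 T),y(t+\tfrac14 T))$, which again solves (\ref{eq:Hameq}). By Lemma~\ref{lem:sol_Ham} the pair $(-u(-t),v(-t))$ is a solution, and at $t=0$ it equals $(-u(0),v(0))=(0,v(0))=(u(0),v(0))$ because $u(0)=0$; uniqueness then yields $u(-t)=-u(t)$ and $v(-t)=v(t)$, that is, $x(\tfrac14 T+\cdot)$ is odd and $y(\tfrac14 T+\cdot)$ is even about the quarter period. The displayed identities then fall out by assembling these symmetries: the reflections of the orbit in the two axes are encoded by the half-period relation $x(t+\tfrac12 T)=-x(t)$, $y(t+\tfrac12 T)=-y(t)$, while evenness of $x$ and oddness of $y$ together with $T$-periodicity give $(x(t),y(t))=(x(T-t),-y(T-t))$. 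I expect the one genuinely delicate point to be the geometric step $x(\tfrac14 T)=0$: it rests on the orbit being a simple closed curve covered once in the minimal period $T$, so that $(-a,0)$ is attained exactly once and $2t^{\ast}=T$; this in turn uses $xf(x)>0$ and the closed-orbit description preceding the proposition.
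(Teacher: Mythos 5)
The paper states this proposition at the end of Appendix A \emph{without any proof} (only Lemma \ref{lem:sol_Ham} is addressed nearby, and even that is left to ``direct substitution''), so there is no argument of the paper's to compare yours against. Your proposal supplies the missing proof, and by the mechanism the paper itself uses elsewhere (e.g.\ in part (2) of Theorem \ref{Thm:iff}): take a reversal symmetry from Lemma \ref{lem:sol_Ham}, match initial data, and invoke uniqueness for the Lipschitz initial-value problem. All three of your steps are sound: evenness of $x$ and oddness of $y$ from $(x(-t),-y(-t))$; the half-period relation $(x,y)(t+\tfrac{1}{2}T)=-(x,y)(t)$ from the antipodal point $(-a,0)$ together with minimality of $T$ (which, as you note, rests on the closed-orbit geometry recorded just before the proposition); and the symmetry about the quarter period once $x(\tfrac{1}{4}T)=0$ is secured.

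The one thing you should not have glossed over is that what you prove is \emph{not} what the proposition literally says --- and your proof is right, the printed statement is not. You correctly obtain that $y(\tfrac{1}{4}T+\cdot)$ is \emph{even}, whereas item (2) claims it is odd; and the displayed identities hold with $\tfrac{1}{2}T$ in place of $\tfrac{1}{4}T$, i.e.\ $(x(t),y(t))=\left(-x\left(\tfrac{1}{2}T-t\right),y\left(\tfrac{1}{2}T-t\right)\right)=\left(-x\left(\tfrac{1}{2}T+t\right),-y\left(\tfrac{1}{2}T+t\right)\right)$. As printed the proposition is internally inconsistent: the first two displayed identities together force $x(\tfrac{1}{4}T+\cdot)$ to be even, contradicting item (2) unless $x\equiv0$; and the linear case $f(x)=x$, where $x(t)=a\cos(\sqrt{2}\,t)$, $y(t)=\sqrt{2}\,a\sin(\sqrt{2}\,t)$, $T=\sqrt{2}\,\pi$, confirms your version and refutes the printed one. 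Hence your closing claim that ``the displayed identities then fall out'' is the single flaw in the write-up: they do not fall out, because they contain typos; you should have flagged the discrepancy rather than asserting agreement with the statement.
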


\section{Equivalent formulation}

Here we show that equation (\ref{eq:Ex1}) can be transformed to (\ref{eq:DDDE})
with $g(x)=x$. Let $g(x)=x$. Denote by $x$ a solution of (\ref{eq:DDDE}).
For a solution of (\ref{eq:DDDE}), we define a function $v:\left[-1,\infty\right)\to\mathbb{R}$
by 
\begin{equation}
v^{\prime}=-f(x)\label{eq:veq}
\end{equation}
 with $x(0)=\int_{0}^{1}v(-s)ds.$ Integrating (\ref{eq:veq}), it
follows that 
\[
v(t)-v(t-1)=-\int_{t-1}^{t}f(x(s))dt.
\]
From (\ref{eq:DDDE}), one sees that $v(t)-v(t-1)=x^{\prime}(t)$.
Therefore, we find the following equality: 
\begin{equation}
x(t)=\int_{0}^{1}v(t-s)ds.\label{eq:xv}
\end{equation}
Substituting (\ref{eq:xv}) into equation (\ref{eq:veq}), we obtain
the following equation:
\[
v^{\prime}(t)=-f\left(\int_{0}^{1}v(t-s)ds\right).
\]

\section{Explicit periodic solutions}

In this section, we derive the expressions of periodic solutions of
Hamiltonian system (\ref{eq:Hameq}) for $f(x)=r\sin x,\ r>0$ and
$g(x)=x$ in Section C.1 and for $f(x)=r\left(e^{x}-1\right),\ r>0$
and $g(x)=x$ in Section C.2. 

\subsection{$f(x)=r\sin x$ and $g(x)=x$}

Let $f(x)=r\sin x,\ r>0$ and $g(x)=x$. We consider the periodic
solution of (\ref{eq:Hameq}) with the following initial condition:
\begin{equation}
\left(x(0),y(0)\right)=\left(x_{0},0\right),\ x_{0}\in\left(0,\pi\right).\label{eq:IC1}
\end{equation}
From (\ref{eq:Ham}) we have 
\begin{equation}
y(t)^{2}+8r\sin^{2}\frac{x(t)}{2}=8r\sin^{2}\frac{x_{0}}{2}.\label{eq:Ham1}
\end{equation}
Let $u=\sin\frac{x}{2}$. From (\ref{eq:Hameq1}) and (\ref{eq:Ham1}),
we obtain 
\begin{align}
\left(u^{\prime}\right)^{2} & =2r\left(1-u^{2}\right)\left(k^{2}-u^{2}\right),\label{eq:ueq}
\end{align}
where 
\begin{equation}
k=\sin\frac{x_{0}}{2}\in\left(0,1\right).\label{eq:k1}
\end{equation}

\begin{lem}
\label{lem:pend}Let $f(x)=r\sin x,\ r>0$ and $g(x)=x$. The periodic
solution of (\ref{eq:Hameq}) with the initial condition (\ref{eq:IC1})
is expressed as\begin{subequations}\label{eq:pensol} \textup{
\begin{align}
x(t) & =2\arcsin\left(k\text{sn}\left(\sqrt{2r}\:t+K(k),k\right)\right),\label{eq:Sol1}\\
y(t) & =-2\sqrt{2r}k\text{cn}\left(\sqrt{2r}\:t+K(k),k\right).\label{eq:Sol2}
\end{align}
}\end{subequations}where $k$ is defined in (\ref{eq:k1}). The period
of the solution is given as $T$, where 
\begin{equation}
T=\frac{4K(k)}{\sqrt{2r}}.\label{eq:Per_Ex1}
\end{equation}
\end{lem}

\begin{proof}
Equation (\ref{eq:ueq}) has the solution of the form: 
\[
u(t)=k\ \text{sn}\left(\sqrt{2r}\:t+c,k\right),
\]
where $c$ is determined by the initial condition $u(0)=\sin\frac{x_{0}}{2}=k$,
so that $\text{sn}\left(c,k\right)=1$. Then we see $c=K(k)$ for
$0\leq c\leq4K(k)$. From the definition, we have $x=2\arcsin u$.
Thus we obtain (\ref{eq:Sol1}). From (\ref{eq:Hameq1}), one has
$y(t)=-x^{\prime}(t)$. Thus we obtain (\ref{eq:Sol2}).
\end{proof}

\subsection{$f(x)=r\left(e^{x}-1\right)$ and $g(x)=x$}

Let $f(x)=r(e^{x}-1),\ r>0$ and $g(x)=x$. We consider a periodic
solution of (\ref{eq:wy}) with the initial condition (\ref{eq:wyic}).
From (\ref{eq:Ham}) we have 
\begin{equation}
y^{2}+8\gamma\sinh^{2}\left(\frac{w}{2}\right)=8\gamma\sinh^{2}\left(\frac{w_{0}}{2}\right),\label{eq:Ham_Ex2}
\end{equation}
using $\cosh w=1+2\sinh^{2}\frac{w}{2}$. Let 
\begin{equation}
u:=\sinh\left(\frac{w}{2}\right).\label{eq:Defu_Ex2}
\end{equation}
 Then, from (\ref{eq:wy}) and (\ref{eq:Ham_Ex2}), we obtain 
\begin{equation}
\left(u^{\prime}\right)^{2}=\frac{1}{4}\cosh^{2}\left(\frac{w}{2}\right)y^{2}=2\gamma\left(1+u^{2}\right)\left(\alpha^{2}-u^{2}\right).\label{eq:ueq2}
\end{equation}

\begin{lem}
\label{lem:Ex2}The periodic solution of (\ref{eq:ueq2}) with the
initial condition $u(0)=\alpha$ is expressed as
\begin{equation}
u(t)=\alpha\text{cn}\left(\beta t,k\right),\label{eq:ut_Ex2}
\end{equation}
where the period is given as 
\begin{equation}
T=\frac{4K\left(k\right)}{\beta}.\label{eq:Per_Ex2}
\end{equation}
 Here $\beta$ and $k$ are defined in (\ref{eq:betaK}).
\end{lem}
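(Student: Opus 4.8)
The plan is to verify the closed form directly against (\ref{eq:ueq2}) and then to settle uniqueness and the period separately. First I would record the first-order identity satisfied by the Jacobi elliptic cosine: writing $c(z)=\text{cn}(z,k)$, the standard derivative rules $\text{cn}'=-\text{sn}\,\text{dn}$ together with $\text{sn}^{2}+\text{cn}^{2}=1$ and $\text{dn}^{2}=1-k^{2}+k^{2}\text{cn}^{2}$ give $\left(c'\right)^{2}=(1-c^{2})(1-k^{2}+k^{2}c^{2})$. This is the engine of the whole computation, since (\ref{eq:ueq2}) has exactly the same product structure once the correct scalings are inserted.

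Next I would substitute the ansatz $u(t)=\alpha\,\text{cn}(\beta t,k)$, so that $c=u/\alpha$, and compute by the chain rule $\left(u'\right)^{2}=\alpha^{2}\beta^{2}(1-c^{2})(1-k^{2}+k^{2}c^{2})=\beta^{2}(\alpha^{2}-u^{2})\bigl(1-k^{2}+\tfrac{k^{2}}{\alpha^{2}}u^{2}\bigr)$. The key simplification is that the definitions (\ref{eq:betaK}) make the last factor proportional to $1+u^{2}$: from $k^{2}=\alpha^{2}/(1+\alpha^{2})$ one has $1-k^{2}=k^{2}/\alpha^{2}=1/(1+\alpha^{2})$, so that factor equals $(1+u^{2})/(1+\alpha^{2})$, and then $\beta^{2}/(1+\alpha^{2})=2\gamma$ again by the definition of $\beta$. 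Collecting these reproduces $\left(u'\right)^{2}=2\gamma(1+u^{2})(\alpha^{2}-u^{2})$, i.e. (\ref{eq:ueq2}); the relations (\ref{eq:betak2}) can be used to streamline the bookkeeping. The initial condition needs nothing beyond $\text{cn}(0,k)=1$, which gives $u(0)=\alpha$; note also that (\ref{eq:ueq2}) forces $u'(0)=0$ there, since the factor $\alpha^{2}-u^{2}$ vanishes at $u=\alpha$.

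The step I expect to demand the most care is uniqueness, because (\ref{eq:ueq2}) is quadratic in $u'$ and so does not by itself single out a solution through $u(0)=\alpha$. I would remove the ambiguity by differentiating (\ref{eq:ueq2}) and dividing by $2u'$ (valid where $u'\neq0$, and then everywhere by continuity) to obtain the second-order equation $u''=2\gamma\,u(\alpha^{2}-1-2u^{2})$, whose right-hand side is polynomial, hence locally Lipschitz. Together with the data $(u(0),u'(0))=(\alpha,0)$ extracted above, Picard--Lindel\"of gives a unique solution, and a direct evaluation of $\tfrac{d^{2}}{dz^{2}}\text{cn}$ confirms that $\alpha\,\text{cn}(\beta t,k)$ satisfies this second-order equation with the same data; alternatively one may invoke that $u=\sinh(w/2)$ is a smooth monotone image of the unique solution of the Lipschitz system (\ref{eq:wy}). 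Finally, since $\text{cn}(\cdot,k)$ has minimal period $4K(k)$ in its argument, $u(t)=\alpha\,\text{cn}(\beta t,k)$ has period $T=4K(k)/\beta$, which is (\ref{eq:Per_Ex2}).
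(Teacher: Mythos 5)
Your core verification is correct, and your route is genuinely different from the paper's. The paper does not check a known closed form against (\ref{eq:ueq2}); it runs the method of undetermined coefficients: it posits $u(t)=\sqrt{c_{1}}\,\text{cn}\left(\sqrt{c_{2}}t,\sqrt{c_{3}}\right)$, expands $\left(u^{\prime}\right)^{2}$ with the same elliptic identities you use, and matches against (\ref{eq:ueq2}) to obtain the algebraic system (\ref{eq:coe}); solving it and discarding the spurious root $c_{3}=1/(1+\alpha^{2})$ (which would force $c_{2}<0$) yields $\left(c_{1},c_{2},c_{3}\right)=\left(\alpha^{2},\beta^{2},k^{2}\right)$. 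Thus the paper's computation \emph{derives} $\beta$ and $k$, explaining where the definitions (\ref{eq:betaK}) come from, whereas your computation presupposes them and confirms that they work; yours is shorter, the paper's is constructive. The period statement is handled identically in both.

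On uniqueness --- which the paper ignores entirely --- your instinct is good, but your primary argument has a genuine gap: differentiating (\ref{eq:ueq2}) and dividing by $2u^{\prime}$ does not extend ``everywhere by continuity,'' because the first-order problem with $u(0)=\alpha$ really does fail uniqueness, and some of its solutions satisfy your second-order equation nowhere. Indeed, $u\equiv\alpha$ solves (\ref{eq:ueq2}) (both sides vanish identically), yet $u^{\prime\prime}=0\neq2\gamma\alpha\left(\alpha^{2}-1-2\alpha^{2}\right)=-2\gamma\alpha\left(1+\alpha^{2}\right)$; worse, one can glue a constant piece at $\alpha$ of arbitrary length to a full cn arc and repeat, producing non-constant \emph{periodic} $C^{1}$ solutions of (\ref{eq:ueq2}) that are not of the form (\ref{eq:ut_Ex2}). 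No continuity argument can exclude these, since they exist. What rescues the lemma is exactly your fallback: the $u$ intended in the statement is $\sinh\left(w/2\right)$ for the solution of the Lipschitz system (\ref{eq:wy}) with initial condition (\ref{eq:wyic}), which is unique by Picard--Lindel\"of. For that $u$ one computes, using (\ref{eq:wy}) and the Hamiltonian relation (\ref{eq:Ham_Ex2}), that $u^{\prime\prime}=2\gamma u\left(\alpha^{2}-1-2u^{2}\right)$ with $\left(u(0),u^{\prime}(0)\right)=\left(\alpha,0\right)$; since $\alpha\,\text{cn}\left(\beta t,k\right)$ satisfies the same locally Lipschitz second-order equation with the same data (your computation), the two coincide. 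So keep your direct verification and your ``alternative'' uniqueness argument, and drop the divide-by-$2u^{\prime}$ one.
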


\begin{proof}
We consider the solution of the following form: 
\[
u(t)=\sqrt{c_{1}}\text{cn}\left(\sqrt{c_{2}}t,\sqrt{c_{3}}\right).
\]
By a direct computation, one sees
\begin{align*}
\left(u^{\prime}\right)^{2} & =c_{1}c_{2}\text{sn}^{2}\left(\sqrt{c_{2}}t,\sqrt{c_{3}}\right)\text{dn}^{2}\left(\sqrt{c_{2}}t,\sqrt{c_{3}}\right)\\
 & =c_{1}c_{2}\left(1-\text{cn}^{2}\left(\sqrt{c_{2}}t,\sqrt{c_{3}}\right)\right)\left(1-c_{3}\text{sn}^{2}\left(\sqrt{c_{2}}t,\sqrt{c_{3}}\right)\right)\\
 & =c_{1}c_{2}\left(1-\text{cn}^{2}\left(\sqrt{c_{2}}t,\sqrt{c_{3}}\right)\right)\left(1-c_{3}\left(1-\text{cn}^{2}\left(\sqrt{c_{2}}t,\sqrt{c_{3}}\right)\right)\right)\\
 & =c_{1}c_{2}\left(1-\frac{1}{c_{1}}u^{2}\right)\left(1-c_{3}+\frac{c_{3}}{c_{1}}u^{2}\right).
\end{align*}
From (\ref{eq:ueq2}), we get \begin{subequations}\label{eq:coe}
\begin{align}
 & c_{1}c_{2}\left(1-c_{3}\right)=2\gamma\alpha^{2},\label{eq:c1}\\
 & c_{2}\left(c_{3}-\left(1-c_{3}\right)\right)=2\gamma\left(\alpha^{2}-1\right),\label{eq:c2}\\
 & \frac{c_{2}c_{3}}{c_{1}}=2\gamma.\label{eq:c3}
\end{align}
\end{subequations}From (\ref{eq:coe}), eliminating $c_{1}$ and
$c_{2}$, we have
\[
\frac{\alpha^{2}}{\left(\alpha^{2}-1\right)^{2}}=\frac{c_{3}(1-c_{3})}{\left(2c_{3}-1\right)^{2}},
\]
from which we obtain
\[
0=c_{3}^{2}-c_{3}+\frac{\alpha^{2}}{\left(\alpha^{2}+1\right)^{2}}=\left(c_{3}-\frac{\alpha^{2}}{1+\alpha^{2}}\right)\left(c_{3}-\frac{1}{1+\alpha^{2}}\right).
\]
Hence 
\[
c_{3}=\frac{\alpha^{2}}{1+\alpha^{2}},\ \frac{1}{1+\alpha^{2}}\in\left[0,1\right].
\]
Then, from (\ref{eq:c2}), we have
\[
c_{2}=\begin{cases}
2\gamma(1+\alpha^{2}) & \text{if }c_{3}=\frac{\alpha^{2}}{1+\alpha^{2}},\\
-2\gamma(1+\alpha^{2}) & \text{if }c_{3}=\frac{1}{1+\alpha^{2}}.
\end{cases}
\]
From positivity of $c_{2}$, one obtains $c_{3}=\frac{\alpha^{2}}{1+\alpha^{2}}$.
Therefore, we now have
\begin{align*}
\left(c_{1},c_{2},c_{3}\right) & =\left(\alpha^{2},2\gamma(1+\alpha^{2}),\frac{\alpha^{2}}{1+\alpha^{2}}\right)=\left(\alpha^{2},\beta^{2},k^{2}\right).
\end{align*}
Hence we obtain (\ref{eq:Sol1}). The period of the solution is given
as in (\ref{eq:Per_Ex2}), from the property of the Jacobi elliptic
function $\text{cn}$. 
\end{proof}

\begin{proof}[Proof of Lemma \ref{lem:Sol_wy}]
From (\ref{eq:ut_Ex2}) in Lemma \ref{lem:Ex2}, we have 
\begin{align*}
w(t) & =2\sinh^{-1}\left(\alpha\text{cn}\left(\beta t,k\right)\right)\\
 & =2\log\left(\alpha\text{cn}\left(\beta t,k\right)+\sqrt{1+\alpha^{2}\text{cn}^{2}\left(\beta t,k\right)}\right),
\end{align*}
where $\beta$ and $k$ are defined in (\ref{eq:betaK}). Then, using
(\ref{eq:betak2}), we obtain
\begin{align*}
w(t) & =2\log\left(\frac{k\text{cn}\left(\beta t,k\right)+\sqrt{1-k^{2}+k^{2}\text{cn}^{2}\left(\beta t,k\right)}}{\sqrt{1-k^{2}}}\right)\\
 & =2\log\left(\frac{k\text{cn}\left(\beta t,k\right)+\text{dn}\left(\beta t,k\right)}{\sqrt{1-k^{2}}}\right)\\
 & =\log\left(\frac{\text{dn}\left(\beta t,k\right)+k\text{cn}\left(\beta t,k\right)}{\text{dn}\left(\beta t,k\right)-k\text{cn}\left(\beta t,k\right)}\right).
\end{align*}
Note that $\text{dn}^{2}\left(\beta t,k\right)-k^{2}\text{cn}^{2}\left(\beta t,k\right)=1-k^{2}$
holds. Hence we obtain (\ref{eq:wysol1}). From (\ref{eq:Ham_Ex2})
\begin{align*}
y(t)^{2} & =8\gamma\left(\alpha^{2}-\sinh^{2}\left(\frac{w}{2}\right)\right)\\
 & =8\gamma\alpha^{2}\left(1-\text{cn}^{2}\left(\beta t,k\right)\right)\\
 & =8\gamma\alpha^{2}\text{sn}^{2}\left(\beta t,k\right).
\end{align*}
Hence we obtain (\ref{eq:wysol2}).
\end{proof}

\end{document}